\theoremstyle{definition}
\newtheorem{theorem}{Theorem}[section]
\newtheorem{proposition}[theorem]{Proposition}
\newtheorem*{definition}{Definition}
\newtheorem{remark}[theorem]{Remark}
\newcommand{\im}{\operatorname{im}}
\author{Federico Galetto}
\title{Finite group characters on free resolutions}
\date{\today}
\address{Department of Mathematics and Statistics, Cleveland State University, Cleveland, OH, 44115-2215, USA}
\email{\href{mailto:f.galetto@csuohio.edu}{\nolinkurl{f.galetto@csuohio.edu}}}
\urladdr{\href{https://math.galetto.org}{\nolinkurl{https://math.galetto.org}}}
\keywords{equivariant resolution, finite group, Betti character, algorithm}
\subjclass[2010]{Primary 13P20; Secondary 13D02, 13A50, 20C15}
\begin{document}

\begin{abstract}
  Under reasonable assumptions, a group action on a module extends to
  the minimal free resolutions of the module.  Explicit descriptions
  of these actions can lead to a better understanding of free
  resolutions by providing, for example, convenient expressions for
  their differentials or alternative characterizations of their Betti
  numbers.  This article introduces an algorithm for computing
  characters of finite groups acting on minimal free resolutions of
  finitely generated graded modules over polynomial rings.
\end{abstract}

\maketitle

\section{Introduction}
\label{sec:introduction}

Let $\Bbbk$ be a field and let $R$ be a polynomial ring over $\Bbbk$
with a positive $\mathbb{Z}^r$-grading\footnote{This assumption
  includes many multigradings of interest in algebra and geometry. If
  preferred, the reader may safely assume to be working with a
  standard grading.} in the sense of \cite[Definition
4.2.4]{MR2159476}. The Betti numbers of a finitely generated graded
$R$-module $M$ are the integers
\begin{equation*}
  \beta_{i,j} (M) = \dim_\Bbbk \operatorname{Tor}^R_i (M, \Bbbk)_j,
\end{equation*}
for $i\in \mathbb{N}$ with $i\leqslant \dim R$ and
$j\in \mathbb{Z}^r$.  It is well understood that Betti numbers encode
many interesting algebraic and geometric properties of $M$ (see
\cite{MR1322960,MR2103875}).

Let $G$ be a finite group. Suppose there is a $\Bbbk$-linear action of
$G$ on both $R$ and $M$ that preserves degrees and is compatible with
the $R$-module structures. If $\Bbbk$ has characteristic zero or
positive characteristic not dividing the order of $G$ (the non modular
case), then $G$ is linearly reductive (see \cite[XVIII, Theorem
1.2]{MR1878556}). In this case, the $G$-action extends to a minimal
free resolution of $M$ in an essentially unique way (see \cite[\S
1]{MR3438710}). There has been significant interest in naturally
occurring examples of minimal free resolutions with such group actions
\cite{MR3215587,MR3779986,MR3799202,MR4043827,MR4105544,MR4108337,MR4155201,2010.06522,2012.13732,MR4136641}.

Understanding group actions on minimal free resolutions in terms of
representation theory can provide useful information about the
structure of the resolution or offer interesting combinatorial
interpretations of the Betti numbers. The goal is to describe the
$\Bbbk$-vector space $\operatorname{Tor}^R_i (M, \Bbbk)_j$ as a
representation of $G$. In the non modular case, this is completely
equivalent to describing the character of $G$ on
$\operatorname{Tor}^R_i (M, \Bbbk)_j$, i.e., the function $G\to \Bbbk$
that returns the trace of the action of an element $g\in G$ on
$\operatorname{Tor}^R_i (M, \Bbbk)_j$ (see \cite[Ch.\
2]{MR0450380}). We propose the following:
\begin{definition}
  The \emph{$(i,j)$-th Betti character of $G$ on $M$}, denoted
  $\beta^G_{i,j} (M)$, is the character of $G$ on
  $\operatorname{Tor}^R_i (M, \Bbbk)_j$.
\end{definition}
Note that the Betti character $\beta^G_{i,j} (M)$ evaluated at the
identity element of $G$ is the usual Betti number $\beta_{i,j} (M)$.

The goal of this work is to describe an algorithm (Algorithm
\ref{bettichar}) to compute Betti characters of finite groups in the
non modular case.  This is similar in spirit to our earlier work
\cite{MR3424030,MR3438710}, which dealt with the case of resolutions
with an action of a semisimple Lie group in characteristic zero.  Our
algorithm requires access to the following data:
\begin{itemize}
\item a minimal free resolution of $M$;
\item matrices for the action of $G$ on the generators of $R$;
\item matrices for the action of $G$ on the generators of $M$.
\end{itemize}
While free resolutions are generally quite difficult to construct, the
assumption is that a minimal free resolution of $M$ can be computed
with the aid of software.  Since characters are class functions (i.e.,
constant on conjugacy classes), it is enough to provide matrices for
the action of a set of representatives of the conjugacy classes of $G$
on $R$ and $M$. Since $G$ typically acts trivially on the
multiplicative identity of $R$, when $M$ is a quotient of $R$ by an
ideal, the action on the generator of $M$ is trivial. Therefore, in
most scenarios of interest, the requirements for running the algorithm
are minimal. The algorithm uses the action matrices to construct a new
minimal free resolution of $M$, then produces a map of complexes
between the given resolution and the new one (see Proposition
\ref{pro:1}). The trace of the individual maps in the map of complexes
are the entries of the desired Betti characters (see Theorem
\ref{thm:1}). While most currently available results describing finite
group actions on resolutions are limited to certain classes of modules
(e.g., monomial ideals) or certain groups (mainly symmetric groups),
the proposed algorithm bears no such restriction; as such, the author
hopes the algorithm will prove useful in exploring further examples of
finite group actions on free resolutions.

This paper is organized as follows: Section \ref{sec:theory}
contains the theoretical background, Section \ref{sec:algorithm}
describes the algorithm, and Section \ref{sec:examples} illustrates
the results with a few examples.

\section{Theory}
\label{sec:theory}

With the same notation as in Section \ref{sec:introduction}, let
$(F_\bullet, d_\bullet)$ be a minimal free resolution of $M$ as an
$R$-module and recall that the differentials
$d_i \colon F_i \to F_{i-1}$ are $G$-equivariant (i.e., they commute
with the action of the group; see \cite[Proposition 2.4.9]{MR3424030}
or \cite[\S~1]{MR3438710}). Computing $\operatorname{Tor}$ along this
resolution gives
\begin{equation*}
  \operatorname{Tor}^R_i (M, \Bbbk)_j \cong (F_i \otimes_R \Bbbk)_j,
\end{equation*}
where $(F_i \otimes_R \Bbbk)_j$ is the graded component of degree $j$
of the graded $\Bbbk$-vector space $F_i \otimes_R \Bbbk$.  For
$g\in G$, denote $\phi^g_i$ the $\Bbbk$-linear map which operates as
multiplication by $g$ on $F_i \otimes_R \Bbbk$.  The value of the
Betti character $\beta^G_{i,j} (M)$ at $g$ is the trace of $\phi^g_i$
restricted to degree $j$ (in both domain and codomain).  Note that
multiplication by $g$ on the modules $F_i$ is not $R$-linear, which
makes it inconvenient to work with directly.  We proceed to repackage
the same information using maps of complexes.

For each $i\geqslant 0$, fix an $R$-basis $\mathcal{E}_i$ of
$F_i$. Observe that the set $\{e\otimes 1 \,|\, e\in \mathcal{E}_i\}$
is a $\Bbbk$-basis of $F_i \otimes_R \Bbbk$.  For $g\in G$, let
$\psi^g_i \colon F_i \to F_i$ be the map of $R$-modules defined by
$\psi^g_i (e) = ge$ for all $e\in \mathcal{E}_i$, and extended by
$R$-linearity. Note that $\psi^g_i$ is invertible and
$(\psi^g_i)^{-1} = \psi^{g^{-1}}_i$.  For every $e\in \mathcal{E}_i$,
we have $\psi^g_i (e) = ge$ and
$\phi^g_i (e\otimes 1) = (ge) \otimes 1$, so $\psi^g_i$ and $\phi^g_i$
output $\Bbbk$-linear combinations with the same coefficients relative
to the bases $\mathcal{E}_i$ and
$\{e \otimes 1 \,|\, e\in \mathcal{E}_i\}$ respectively. Now the value
of the Betti character $\beta^G_{i,j} (M)$ at $g$ is the trace of
$\psi^g_i$ restricted to degree $j$, and $\psi^g_i$ is an $R$-linear
map.

In addition to being $R$-linear, the maps $\psi^g_i$ can be collected
into a map of complexes of $R$-modules.  For this purpose, we define
for every element $g\in G$ a map of $R$-modules
$d^g_i \colon F_i \to F_{i-1}$ by setting
$d^g_i = \psi^g_{i-1} d_i (\psi^g_i)^{-1}$.
\begin{proposition}\label{pro:1}
  The following are true for every $g\in G$.
  \begin{enumerate}[label=(\arabic*)]
  \item The collection $(F_\bullet, d^g_\bullet)$ is a minimal free
    resolution of $M$.
  \item The collection
    $\psi^g_\bullet \colon (F_\bullet, d_\bullet) \to (F_\bullet,
    d^g_\bullet)$ is a map of complexes.
  \item\label{item:2} If $[a_{uv}]$ is the matrix of $d_i$ relative to
    the bases $\mathcal{E}_i$ and $\mathcal{E}_{i-1}$, then the matrix
    of $d^g_i$ relative to the same bases is $[g^{-1} a_{uv}]$.
  \end{enumerate}
\end{proposition}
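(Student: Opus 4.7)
The plan is to prove (2) first, since it is immediate from the construction; then to establish (3) as a direct calculation using $G$-equivariance of the original differentials; and finally to combine these with standard facts about chain isomorphisms to deduce (1).

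For (2), I would simply rewrite the defining relation $d^g_i = \psi^g_{i-1} d_i (\psi^g_i)^{-1}$ as $d^g_i \psi^g_i = \psi^g_{i-1} d_i$, which is exactly the commutativity required of a chain map from $(F_\bullet, d_\bullet)$ to $(F_\bullet, d^g_\bullet)$.

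For (3), the key input is that the original $d_i$ is $G$-equivariant, as recalled at the start of Section \ref{sec:theory}. I would unwind $d^g_i(e_v) = \psi^g_{i-1} d_i (\psi^g_i)^{-1}(e_v)$ in three steps: first, $(\psi^g_i)^{-1}(e_v) = \psi^{g^{-1}}_i(e_v) = g^{-1} e_v$; next, $G$-equivariance of $d_i$ together with the compatibility of the $G$-action with the $R$-module structure rewrites $d_i(g^{-1} e_v) = g^{-1} d_i(e_v)$ as $\sum_u (g^{-1} a_{uv})(g^{-1} e_u)$; finally, $R$-linearity of $\psi^g_{i-1}$ and the defining relation $\psi^g_{i-1}(g^{-1} e_u) = e_u$ collapse this to $\sum_u (g^{-1} a_{uv}) e_u$, so the matrix is $[g^{-1} a_{uv}]$ as claimed.

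For (1), freeness of each $F_i$ is automatic. Since each $\psi^g_i$ is invertible with inverse $\psi^{g^{-1}}_i$, part (2) exhibits an isomorphism of complexes, so $(F_\bullet, d^g_\bullet)$ has the same homology as $(F_\bullet, d_\bullet)$; composing the chosen augmentation $F_0 \to M$ with $(\psi^g_0)^{-1}$ then realizes $(F_\bullet, d^g_\bullet)$ as a resolution of $M$ itself. Minimality follows from (3): the entries $a_{uv}$ lie in the maximal homogeneous ideal $\mathfrak{m}$ by minimality of $d_i$, and since the $G$-action preserves degree it preserves $\mathfrak{m}$, so $g^{-1} a_{uv} \in \mathfrak{m}$ as well. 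I expect the main subtlety to be keeping careful track of the distinction between the honest $G$-action on $F_i$, which is $\Bbbk$-linear but only $R$-semilinear, and the $R$-linear auxiliary map $\psi^g_i$: the two agree on basis elements by definition but not on arbitrary elements, and it is precisely this discrepancy that introduces the twist $a_{uv} \mapsto g^{-1} a_{uv}$ in part (3).
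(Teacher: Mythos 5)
Your proof is correct, and it reorganizes the argument in a way that differs mildly but genuinely from the paper. The paper proves (1) first via a direct kernel/image chase through two commuting squares, then observes that (2) is simply the commutativity of one of those squares, and then proves (3) independently. You instead take (2) as the immediate starting point, prove (3), and then assemble (1) from these, which is logically cleaner since (2) really is the most elementary fact. The more substantive divergence is in how minimality is established in (1): the paper argues $\im d^g_i = \psi^g_{i-1}(\im d_i) \subseteq \psi^g_{i-1}(\mathfrak{m} F_{i-1}) = \mathfrak{m}\,\psi^g_{i-1}(F_{i-1}) = \mathfrak{m} F_{i-1}$, leaning on the $R$-linearity and bijectivity of $\psi^g_{i-1}$, whereas you invoke (3) and the observation that the degree-preserving $\Bbbk$-linear action of $G$ stabilizes $\mathfrak{m} = \bigoplus_{d>0} R_d$, so $a_{uv} \in \mathfrak{m}$ implies $g^{-1} a_{uv} \in \mathfrak{m}$. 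Both arguments are sound; yours has the small advantage of illustrating why (3) is a useful fact rather than an isolated computational remark, while the paper's keeps (1) self-contained. Your closing remark about the discrepancy between the $R$-semilinear group action and the $R$-linear maps $\psi^g_i$ correctly identifies the conceptual crux of the whole construction.
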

\begin{proof}
  Fix an arbitrary element $g\in G$ throughout the proof.
  \begin{enumerate}[wide,label=(\arabic*)]
  \item\label{item:1} The definition of $d^g_i$ immediately implies that the diagrams
    \begin{center}
      \begin{tikzpicture}[description/.style={fill=white,inner sep=2pt}]
        \matrix (m) [matrix of math nodes, row sep=3em,
        column sep=2.5em, text height=1.5ex, text depth=0.25ex]
        { F_{i-1} & F_i \\
          F_{i-1} & F_i \\ };
        \path[->,font=\scriptsize]
        (m-2-2) edge node[right] {$ \psi^g_i $} (m-1-2);
        \path[->,font=\scriptsize]
        (m-1-2) edge node[above] {$ d^g_i $} (m-1-1);
        \path[->,font=\scriptsize]
        (m-2-1) edge node[left] {$ \psi^g_{i-1} $} (m-1-1);
        \path[->,font=\scriptsize]
        (m-2-2) edge node[below] {$ d_i $} (m-2-1);
      \end{tikzpicture}
      \qquad
      \begin{tikzpicture}[description/.style={fill=white,inner sep=2pt}]
        \matrix (m) [matrix of math nodes, row sep=3em,
        column sep=2.5em, text height=1.5ex, text depth=0.25ex]
        { F_{i-1} & F_i \\
          F_{i-1} & F_i \\ };
        \path[->,font=\scriptsize]
        (m-1-2) edge node[right] {$ (\psi^g_i)^{-1} $} (m-2-2);
        \path[->,font=\scriptsize]
        (m-1-2) edge node[above] {$ d^g_i $} (m-1-1);
        \path[->,font=\scriptsize]
        (m-1-1) edge node[left] {$ (\psi^g_{i-1})^{-1} $} (m-2-1);
        \path[->,font=\scriptsize]
        (m-2-2) edge node[below] {$ d_i $} (m-2-1);
      \end{tikzpicture}
    \end{center}
    commute. Commutativity of the left diagram implies
    $\psi^g_i (\ker d_i) \subseteq \ker d^g_i$. Commutativity of the
    right diagram implies
    $(\psi^g_i)^{-1} (\ker d^g_i) \subseteq \ker d_i$, hence
    $\ker d^g_i \subseteq \psi^g_i (\ker d_i)$. Hence we have the
    equality $\psi^g_i (\ker d_i) = \ker d^g_i$. A similar reasoning
    shows $\psi^g_{i-1} (\im d_i) = \im d^g_i$. Since
    $(F_\bullet, d_\bullet)$ is a free resolution, there is an
    equality $\ker d_i = \im d_{i+1}$ for every integer
    $i\geqslant 1$. Applying the morphism $\psi^g_i$ and combining
    with the previous equalities, we deduce
    $\ker d^g_i = \im d^g_{i+1}$, thus showing
    $(F_\bullet, d^g_\bullet)$ is also a free resolution.

    Let $d_0 \colon F_0 \to M$ be the surjection that augments
    $(F_\bullet, d_\bullet)$ to a free resolution of the $R$-module
    $M$, so that $\ker d_0 = \im d_1$. Define the function
    $d^g_0 \colon F_0 \to M$ by setting $d^g_0 = d_0 (\psi^g_0)^{-1}$,
    so that $d^g_0 \psi^g_0 = d_0$. It follows immediately from the
    definition that $d^g_0$ is an epimorphism of
    $R$-modules. Moreover, the same kind of argument used in the
    previous paragraph shows that $\psi^g_0 (\ker d_0) = \ker
    d^g_0$. Combining equalities as before, we conclude that $d^g_0$
    augments $(F_\bullet, d^g_\bullet)$ to a free resolution of $M$.

    It remains to prove that $(F_\bullet, d^g_\bullet)$ is minimal,
    i.e., for every integer $i\geqslant 1$,
    $\im d^g_i \subseteq \mathfrak{m} F_{i-1}$ where $\mathfrak{m}$
    denotes the maximal ideal generated by the variables in $R$. We
    have previously established that
    $\psi^g_{i-1} (\im d_i) = \im d^g_i$, and we know
    $\im d_i \subseteq \mathfrak{m} F_{i-1}$ because
    $(F_\bullet, d_\bullet)$ is minimal. Combining this with the fact
    that $\psi^g_{i-1}$ is an isomorphism of $R$-modules, we have
    \begin{equation*}
      \im d^g_i = \psi^g_{i-1} (\im d_i)
      \subseteq \psi^g_{i-1} (\mathfrak{m} F_{i-1})
      = \mathfrak{m} \psi^g_{i-1} (F_{i-1})
      = \mathfrak{m} F_{i-1},
    \end{equation*}
    as desired.
  \item This statement is the commutativity of the left diagram in
    part \ref{item:1} of the proof.
  \item Let $\mathcal{E}_i = \{f_1,\dots,f_n\}$ and
    $\mathcal{E}_{i-1} = \{e_1,\dots,e_m\}$.  The entry in row $u$,
    column $v$ in the matrix of $d^g_i$ relative to $\mathcal{E}_i$
    and $\mathcal{E}_{i-1}$ is the coefficient of $e_u$ in the
    expression of $d^g_i (f_v)$ as a linear combination of
    $e_1,\dots,e_m$.  Since $d_i$ is $G$-equivariant, we have
    \begin{equation*}
      d^g_i (f_v)
      = \psi^g_{i-1} d_i (\psi^g_i)^{-1} (f_v)
      = \psi^g_{i-1} d_i (g^{-1} f_v)
      = \psi^g_{i-1} (g^{-1} d_i (f_v)).
    \end{equation*}
    If the matrix of $d_i$ relative to $\mathcal{E}_i$ and
    $\mathcal{E}_{i-1}$ is $[a_{uv}]$, then we get
    \begin{equation*}
      d^g_i (f_v)
      =  \psi^g_{i-1} \left( g^{-1} \sum_{i=1}^m a_{uv} e_u \right)
      =  \psi^g_{i-1} \left( \sum_{i=1}^m (g^{-1} a_{uv}) (g^{-1} e_u) \right),
    \end{equation*}
    because the $G$-action is compatible with the $R$-module
    operations. Finally, by definition of $\psi^g_{i-1}$, we
    obtain
    \begin{equation*}
      d^g_i (f_v)
      =  \sum_{i=1}^m (g^{-1} a_{uv}) \psi^g_{i-1} (g^{-1} e_u)
      =  \sum_{i=1}^m (g^{-1} a_{uv}) \psi^g_{i-1} \psi^{g^{-1}}_{i-1} (e_u)
      =  \sum_{i=1}^m (g^{-1} a_{uv}) e_u.
    \end{equation*}
    Therefore the coefficient of $e_u$ in the expression of
    $d^g_i (f_v)$ as a linear combination of $e_1,\dots,e_m$ is
    $g^{-1} a_{uv}$.
  \end{enumerate}
\end{proof}

We come to our main result, which, combined with Proposition
\ref{pro:1}, gives a practical way to compute $\beta^G_{i,j} (M) (g)$,
the value of the $(i,j)$-th Betti character of $G$ on $M$ at an
element $g\in G$. As observed earlier, $\beta^G_{i,j} (M) (g)$ is
equal to the trace of $\psi^g_i$ restricted to degree $j$.  The
problem is to determine matrices for the maps $\psi^g_i$.  As we will
show, this is not entirely necessary. In fact, it suffices to find
maps $\hat{\psi}^g_i$ that agree with $\psi^g_i$ in degree $j$.

Fix a free $R$-module $F_i$ with basis $\mathcal{E}_i$, and let
$F_{i,j}$ be the free direct summand of $F_i$ generated by the
elements of $\mathcal{E}_i$ of degree $j$. Let
$\pi_{i,j} \colon F_i \to F_{i,j}$ be the natural projection, which
does not depend on the chosen bases. Observe that $\pi_{i,j}$ is
$G$-equivariant because the action of $G$ sends an element of
$\mathcal{E}_i$ of degree $j$ to a $\Bbbk$-linear combination of
elements of $\mathcal{E}_i$ of degree $j$.

\begin{theorem}\label{thm:1}
  For every $g\in G$, if
  $\hat{\psi}^g_\bullet \colon (F_\bullet, d_\bullet) \to (F_\bullet,
  d^g_\bullet)$ is a map of complexes with
  $\hat{\psi}^g_0 = \psi^g_0$, then
  $\pi_{i,j} \hat{\psi}^g_i (e) = ge = \psi^g_i (e)$ for every
  $e\in \mathcal{E}_i$ of degree $j$.  In particular,
  $\beta^G_{i,j} (M) (g)$ is equal to the trace of $\hat{\psi}^g_i$
  restricted to degree $j$.
\end{theorem}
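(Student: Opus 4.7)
My plan is to deploy the standard uniqueness (up to chain homotopy) of lifts between free resolutions. By Proposition~\ref{pro:1}(2) and the hypothesis, both $\psi^g_\bullet$ and $\hat{\psi}^g_\bullet$ are maps of complexes $(F_\bullet, d_\bullet)\to(F_\bullet, d^g_\bullet)$, and each lifts $\mathrm{id}_M$: for $\psi^g_\bullet$ via the relation $d^g_0\psi^g_0=d_0$ established in the proof of Proposition~\ref{pro:1}(1), and for $\hat{\psi}^g_\bullet$ via the hypothesis $\hat{\psi}^g_0=\psi^g_0$. The standard inductive construction therefore yields $R$-linear maps $h_i\colon F_i\to F_{i+1}$ (with $h_{-1}=0$) satisfying
\[
\hat{\psi}^g_i-\psi^g_i \;=\; d^g_{i+1}\,h_i + h_{i-1}\,d_i.
\]

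Next I evaluate both sides on $e\in\mathcal{E}_i$ of degree $j$ and use minimality of both resolutions. By Proposition~\ref{pro:1}(1), $(F_\bullet,d^g_\bullet)$ is minimal, so $d^g_{i+1}(h_i(e))\in\mathfrak{m}F_i$; by minimality of $(F_\bullet,d_\bullet)$ together with the $R$-linearity of $h_{i-1}$, we also have $h_{i-1}(d_i(e))\in h_{i-1}(\mathfrak{m}F_{i-1})\subseteq\mathfrak{m}F_i$. Consequently $(\hat{\psi}^g_i-\psi^g_i)(e)\in\mathfrak{m}F_i$.

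Applying the $R$-linear projection $\pi_{i,j}$ sends this difference into $\mathfrak{m}F_{i,j}$, while it remains homogeneous of internal degree $j$; since every $R$-generator of $F_{i,j}$ already has degree $j$ and $\mathfrak{m}$ has no degree-zero component, the degree-$j$ part of $\mathfrak{m}F_{i,j}$ vanishes. Therefore $\pi_{i,j}\hat{\psi}^g_i(e)=\pi_{i,j}\psi^g_i(e)=\pi_{i,j}(ge)$, and the observation preceding the theorem (that $ge$ is a $\Bbbk$-linear combination of $\mathcal{E}_i$-elements of degree $j$) gives $ge\in F_{i,j}$, so this projection equals $ge$. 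The ``in particular'' clause then follows at once: the trace of $\hat{\psi}^g_i$ restricted to degree $j$ is the sum, over $e\in\mathcal{E}_i$ of degree $j$, of the coefficient of $e$ in $\pi_{i,j}\hat{\psi}^g_i(e)=ge$, which reproduces the trace of $\psi^g_i$ restricted to degree $j$, namely $\beta^G_{i,j}(M)(g)$. The step I expect needs the most care is arranging that both resolutions are simultaneously minimal, since this is precisely what forces both terms of the homotopy into $\mathfrak{m}F_i$ and thus into the kernel of $\pi_{i,j}$ in degree $j$.
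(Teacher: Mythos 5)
Your proof is correct and follows essentially the same strategy as the paper: invoke the Comparison Theorem to obtain a chain homotopy between $\hat{\psi}^g_\bullet$ and $\psi^g_\bullet$, use minimality of both resolutions to force $(\hat{\psi}^g_i-\psi^g_i)(e)$ into $\mathfrak{m}F_i$, and finish with the degree argument via $\pi_{i,j}$. The only stylistic difference is that you substitute the explicit homotopy identity $\hat{\psi}^g_i-\psi^g_i=d^g_{i+1}h_i+h_{i-1}d_i$ to get the containment directly, whereas the paper phrases the same step as ``homotopic maps induce the same map in homology'' (which, for minimal resolutions, is exactly the statement $\im(\hat{\psi}^g_i-\psi^g_i)\subseteq\mathfrak{m}F_i$).
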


\begin{proof}
  By Proposition \ref{pro:1}, $(F_\bullet, d_\bullet)$ and
  $(F_\bullet, d^g_\bullet)$ are both minimal free resolutions of
  $M$. Moreover, $\hat{\psi}^g_\bullet$ and $\psi^g_\bullet$ are both
  maps of complexes lifting the $R$-automorphism of $M$ induced by
  $\hat{\psi}^g_0 = \psi^g_0$. By the Comparison Theorem (see
  \cite[Theorem 2.2.6]{MR1269324}), $\hat{\psi}^g_\bullet$ and
  $\psi^g_\bullet$ are homotopic, and therefore they induce the same
  map in homology (see \cite[Lemma 1.4.5]{MR1269324}). It follows
  that, for every $i\geqslant 0$,
  $\operatorname{im} (\hat{\psi}^g_i -\psi^g_i) \subseteq \mathfrak{m}
  F_i$.

  Consider an element $e\in \mathcal{E}_i$ of degree $j$.  We have
  $\hat{\psi}^g_i (e) = \psi^g_i (e) + f = ge+f$ for some
  $f\in \mathfrak{m} F_i$. Note that both $\hat{\psi}^g_i (e)$ and
  $ge$ have degree $j$ because $\hat{\psi}^g_i$ and the group action
  preserve degrees. It follows that $f$ also has degree $j$. Now apply
  the projection $\pi_{i,j}$ to obtain
  $\pi_{i,j} \hat{\psi}^g_i (e) = ge+\pi_{i,j} (f)$. Since
  $f\in \mathfrak{m} F_i$, we have that
  $\pi_{i,j} (f)\in \mathfrak{m} F_{i,j}$. However, since $F_{i,j}$ is
  a free module generated in degree $j$, the only element of degree
  $j$ in $\mathfrak{m} F_{i,j}$ is zero; hence, $\pi_{i,j} (f) = 0$.
  We conclude that $\pi_{i,j} \hat{\psi}^g_i (e) = ge = \psi^g_i (e)$,
  as desired. The equality between $\beta^G_{i,j} (M)$ and the trace
  of $\hat{\psi}^g_i$ restricted to degree $j$ is an immediate
  consequence.
\end{proof}

\begin{remark}
  After evaluation at representatives of all conjugacy classes of $G$,
  the character $\beta^G_{i,j} (M)$ can be decomposed into irreducible
  characters using the character table of $G$ (see for example
  \cite[Ch. 2]{MR0450380}). This allows further insight into the
  structure of $\operatorname{Tor}^R_i (M, \Bbbk)_j$ as a
  representation of $G$.
\end{remark}

\section{Examples}
\label{sec:examples}

Let $R = \mathbb{Q}[x_1,x_2,x_3,x_4]$ and consider the monomial ideal
\begin{equation*}
  I = \langle
  {x}_{1}{x}_{2},{x}_{1}{x}_{3},{x}_{2}{x}_{3},{x}_{1}{x}_{4},{x}_{2}{x}_{4},{x}_{3}{x}_{4}
  \rangle.
\end{equation*}
We regard $I$ as an $R$-module and compute a minimal free resolution
of $I$ with the help of the software Macaulay2 \cite{M2}. The
resolution $(F_\bullet, d_\bullet)$ has the following form:
\begin{center}
  \begin{tikzpicture}[description/.style={fill=white,inner sep=2pt}]
    \matrix (m) [matrix of math nodes, row sep=3em,
    column sep=2.5em, text height=1.5ex, text depth=0.25ex]
    { R(-2)^6 & R(-3)^8 & R(-4)^3 \\ };
    \path[->,font=\scriptsize]
    (m-1-2) edge node[above] {$ d_1 $} (m-1-1);
    \path[->,font=\scriptsize]
    (m-1-3) edge node[above] {$ d_2 $} (m-1-2);
  \end{tikzpicture}
\end{center}
with
\begin{equation*}
  d_1 =
  \begin{bmatrix}
    -{x}_{3}&0&-{x}_{4}&0&0&0&0&0\\
    {x}_{2}&-{x}_{2}&0&0&-{x}_{4}&0&0&0\\
    0&{x}_{1}&0&0&0&0&-{x}_{4}&0\\
    0&0&{x}_{2}&-{x}_{2}&{x}_{3}&-{x}_{3}&0&0\\
    0&0&0&{x}_{1}&0&0&{x}_{3}&-{x}_{3}\\
    0&0&0&0&0&{x}_{1}&0&{x}_{2}
  \end{bmatrix}
\end{equation*}
and
\begin{equation*}
  d_2 =
  \begin{bmatrix}
    {x}_{4}&{x}_{4}&0\\
    0&{x}_{4}&0\\
    -{x}_{3}&-{x}_{3}&0\\
    0&-{x}_{3}&{x}_{3}\\
    {x}_{2}&0&0\\
    0&0&-{x}_{2}\\
    0&{x}_{1}&0\\
    0&0&{x}_{1}
  \end{bmatrix}.
\end{equation*}
The symmetric group $\mathfrak{S}_4$ acts on $R$ by permuting
variables, i.e., $\sigma x_i = x_{\sigma (i)}$ for every
$\sigma \in \mathfrak{S}_4$. The ideal $I$ is stable under this
action, hence $\mathfrak{S}_4$ acts on the minimal free resolution
above. We are going to determine the Betti characters
$\beta^{\mathfrak{S}_4}_{i,j} (I)$.

Recall that conjugacy classes in $\mathfrak{S}_4$ are determined by cycle type \cite[\S~1.1]{MR1824028}, and therefore correspond to partitions of 4. We choose the following permutations to represent the conjugacy classes in $\mathfrak{S}_4$: in cycle notation, $(1234)$, $(123)$, $(12)(34)$, $(12)$, and $\operatorname{id}_{\mathfrak{S}_4}$. It is easy to describe the action of $G$ on the $\mathbb{Q}$-vector space spanned by the generators of $I$ as this is a permutation representation. This action determines the matrices of the maps $\psi^\sigma_0 \colon R(-2)^6 \to R(-2)^6$ for $\sigma \in \mathfrak{S}_4$. We have:
\begin{align*}
    \psi^{(1234)}_0 &=
    \begin{bmatrix}
      0&0&0&1&0&0\\
      0&0&0&0&1&0\\
      1&0&0&0&0&0\\
      0&0&0&0&0&1\\
      0&1&0&0&0&0\\
      0&0&1&0&0&0
    \end{bmatrix},&
    \psi^{(123)}_0 &=
    \begin{bmatrix}
      0&1&0&0&0&0\\
      0&0&1&0&0&0\\
      1&0&0&0&0&0\\
      0&0&0&0&0&1\\
      0&0&0&1&0&0\\
      0&0&0&0&1&0
    \end{bmatrix},\\
    \psi^{(12)(34)}_0 &=
    \begin{bmatrix}
      1&0&0&0&0&0\\
      0&0&0&0&1&0\\
      0&0&0&1&0&0\\
      0&0&1&0&0&0\\
      0&1&0&0&0&0\\
      0&0&0&0&0&1
    \end{bmatrix},&
    \psi^{(12)}_0 &=
    \begin{bmatrix}
      1&0&0&0&0&0\\
      0&0&1&0&0&0\\
      0&1&0&0&0&0\\
      0&0&0&0&1&0\\
      0&0&0&1&0&0\\
      0&0&0&0&0&1
    \end{bmatrix}.
\end{align*}
From the trace of these matrices, we can determine the Betti character
\begingroup
\renewcommand{\arraystretch}{1.6}
\begin{equation*}
  \begin{array}{c|ccccc}
    & (1234) & (123) & (12)(34) & (12) & \operatorname{id}_{\mathfrak{S}_4}\\ \hline
    \beta^{\mathfrak{S}_4}_{0,2} (I) &0 &0 &2 &2 &6
  \end{array}
\end{equation*}
\endgroup
where the last value is the trace of the identity map on the module
$F_0$, which is the same as the rank of $F_0$.

We illustrate the computation of the other Betti characters by
focusing on the element $(1234)$. The resolution
$(F_\bullet, d^{(1234)}_\bullet)$ has the following form:
\begin{center}
  \begin{tikzpicture}[description/.style={fill=white,inner sep=2pt}]
    \matrix (m) [matrix of math nodes, row sep=3em,
    column sep=3.5em, text height=1.5ex, text depth=0.25ex]
    { R(-2)^6 & R(-3)^8 & R(-4)^3 \\ };
    \path[->,font=\scriptsize]
    (m-1-2) edge node[above] {$ d^{(1234)}_1 $} (m-1-1);
    \path[->,font=\scriptsize]
    (m-1-3) edge node[above] {$ d^{(1234)}_2 $} (m-1-2);
  \end{tikzpicture}
\end{center}
with
\begin{equation*}
  d^{(1234)}_1 =
  \begin{bmatrix}
    -{x}_{2}&0&-{x}_{3}&0&0&0&0&0\\
    {x}_{1}&-{x}_{1}&0&0&-{x}_{3}&0&0&0\\
    0&{x}_{4}&0&0&0&0&-{x}_{3}&0\\
    0&0&{x}_{1}&-{x}_{1}&{x}_{2}&-{x}_{2}&0&0\\
    0&0&0&{x}_{4}&0&0&{x}_{2}&-{x}_{2}\\
    0&0&0&0&0&{x}_{4}&0&{x}_{1}
  \end{bmatrix}
\end{equation*}
and
\begin{equation*}
  d^{(1234)}_2 =
  \begin{bmatrix}
    {x}_{3}&{x}_{3}&0\\
    0&{x}_{3}&0\\
    -{x}_{2}&-{x}_{2}&0\\
    0&-{x}_{2}&{x}_{2}\\
    {x}_{1}&0&0\\
    0&0&-{x}_{1}\\
    0&{x}_{4}&0\\
    0&0&{x}_{4}
  \end{bmatrix}.
\end{equation*}
By Proposition \ref{pro:1} \ref{item:2}, the matrices of the
differentials $d^{(1234)}_i$ are obtained by applying the permutation
$(1234)^{-1} = (1432)$ to the matrices of the differentials $d_i$.

Next we compute $\hat{\psi}^{(1234)}_1$ by solving the equation $\psi^{(1234)}_0 d_1 = d^{(1234)}_1 \hat{\psi}^{(1234)}_1$. In this example, the solution can be found by direct inspection although in general it would require solving a system of linear equations. The result is
\begin{equation*}
  \hat{\psi}^{(1234)}_1 =
  \begin{bmatrix}
    0&0&-1&1&0&0&0&0\\
    0&0&-1&0&0&0&0&0\\
    0&0&0&0&-1&1&0&0\\
    0&0&0&0&-1&0&0&0\\
    0&0&0&0&0&0&-1&1\\
    0&0&0&0&0&0&-1&0\\
    1&0&0&0&0&0&0&0\\
    0&1&0&0&0&0&0&0
  \end{bmatrix}.
\end{equation*}
Note that this is an invertible matrix with entries in $\mathbb{Q}$ whose domain and codomain are both generated in degree 3. It follows that this is actually the matrix of $\psi^{(1234)}_1$, the map defined to act like $(1234)$ on the given basis of $F_1$.

Similarly, we compute $\hat{\psi}^{(1234)}_2$ by solving the equation $\hat{\psi}^{(1234)}_1 d_2 = d^{(1234)}_2 \hat{\psi}^{(1234)}_2$. The result is
\begin{equation*}
  \hat{\psi}^{(1234)}_2 =
  \begin{bmatrix}
    0&-1&1\\
    1&1&0\\
    0&1&0
  \end{bmatrix}.
\end{equation*}

Repeating the process for the representatives of the other conjugacy
classes, we find a complete description of the Betti characters, which
we summarize in the table below.
\begingroup
\renewcommand{\arraystretch}{1.6}
\begin{equation*}
  \begin{array}{c|ccccc}
    & (1234) & (123) & (12)(34) & (12) & \operatorname{id}_{\mathfrak{S}_4}\\ \hline
    \beta^{\mathfrak{S}_4}_{0,2} (I) &0 &0 &2 &2 &6\\
    \beta^{\mathfrak{S}_4}_{1,3} (I) &0 &-1 &0 &0 &8\\
    \beta^{\mathfrak{S}_4}_{2,4} (I) &1 &0 &-1 &-1 &3\\
  \end{array}
\end{equation*}
\endgroup The reader may verify the Betti characters found here are
the same as the characters of the representations found in
\cite[\S~4]{MR4105544}. We also mention that $I$ is a symmetric
shifted ideal and its equivariant resolution can be found as described
in \cite[Theorem 6.2, Example 6.3]{MR4108337}.

For completeness, we include matrices of the maps
$\hat{\psi}^\sigma_\bullet$ for the representatives of the other
conjugacy classes (except the identity). These matrices were found
using our implementation of Algorithm \ref{bettichar} in Macaulay2.
\begin{align*}
    \hat{\psi}^{(123)}_1 &=
    \left[
      \begin{smallmatrix}
        -1&1&0&0&0&0&0&0\\
        -1&0&0&0&0&0&0&0\\
        0&0&0&0&1&0&0&0\\
        0&0&0&0&1&-1&0&0\\
        0&0&0&0&0&0&1&0\\
        0&0&0&0&0&0&1&-1\\
        0&0&1&0&0&0&0&0\\
        0&0&0&1&0&0&0&0
      \end{smallmatrix}
    \right],&
    \hat{\psi}^{(123)}_2 &=
    \left[
      \begin{smallmatrix}
        0&1&0\\
        -1&-1&0\\
        0&-1&1
      \end{smallmatrix}
    \right],\\
    \hat{\psi}^{(12)(34)}_1 &=
    \left[
      \begin{smallmatrix}
        0&0&1&0&0&0&0&0\\
        0&0&1&-1&0&0&0&0\\
        1&0&0&0&0&0&0&0\\
        1&-1&0&0&0&0&0&0\\
        0&0&0&0&0&0&-1&1\\
        0&0&0&0&0&0&0&1\\
        0&0&0&0&-1&1&0&0\\
        0&0&0&0&0&1&0&0
      \end{smallmatrix}
    \right],&
    \hat{\psi}^{(12)(34)}_2 &=
    \left[
      \begin{smallmatrix}
        0&-1&1\\
        -1&0&-1\\
        0&0&-1
      \end{smallmatrix}
    \right],\\
    \hat{\psi}^{(12)}_1 &=
    \left[
      \begin{smallmatrix}
        1&0&0&0&0&0&0&0\\
        1&-1&0&0&0&0&0&0\\
        0&0&1&0&0&0&0&0\\
        0&0&1&-1&0&0&0&0\\
        0&0&0&0&0&0&1&0\\
        0&0&0&0&0&0&0&1\\
        0&0&0&0&1&0&0&0\\
        0&0&0&0&0&1&0&0
      \end{smallmatrix}
    \right],&
    \hat{\psi}^{(12)}_2 &=
    \left[
      \begin{smallmatrix}
        0&1&0\\
        1&0&0\\
        0&0&-1
      \end{smallmatrix}
    \right].
\end{align*}

\section{Algorithm}
\label{sec:algorithm}

We generalize the ideas of the example from Section \ref{sec:examples}
and formalize them into Algorithm \ref{bettichar}, which we present in
pseudocode below. We refer again to the notations introduced in
Sections \ref{sec:introduction} and \ref{sec:theory}.

We start by describing some auxiliary functions. The implementation
will depend on the system but should be fairly straightforward.

\begin{description}[wide,font=\normalfont]
\item[\textsc{degrees}($A$)] Takes as input the matrix $A$ of a map
  between graded free $R$-modules. It returns a list of degrees
  $\{\deg (e_1),\dots,\deg (e_n)\}$, where $\{e_1,\dots,e_n\}$ is the
  basis used for the domain of $A$.
\item[\textsc{trace}($A$,$j$)] Takes as input the matrix $A$ of a map
  between two copies of the same graded free $R$-module and a degree
  $j$. It returns the trace of the submatrix of $A$ formed by taking
  only rows and columns corresponding to basis elements of degree $j$.
\item[\textsc{action}($g$,$A$)] Takes as input an element $g\in G$ and
  the matrix $A=[a_{uv}]$ of a map between graded free $R$-modules. It
  returns the matrix $[g a_{uv}]$ obtained by acting with $g$ on each
  entry of $A$.
\item[\textsc{factor}($A$,$B$)] Takes as input two matrices $A$ and
  $B$ corresponding to maps between graded free $R$-modules. The maps
  should have the same codomain. Also $A$ and $B$ should be expressed
  relative to the same basis of their common codomain. It returns a
  matrix $C$ with homogeneous entries in $R$ such that $A = BC$.
\end{description}

\begin{remark}
  In Macaulay2, the function \textsc{action} can be implemented as a
  substitution that sends each variable $x_i$ in the ring
  $R = \Bbbk [x_1,\dots, x_n]$ to the element $g x_i$, which can be
  expressed as a linear combination of the variables. Similarly, the
  function \textsc{factor} is already implemented in Macaulay2 via the
  operator \texttt{//}. We also point out that Macaulay2 has the
  ability to compute extensions of module maps to maps of complexes via
  the function \texttt{extend}; this can be used to obtain the map of
  complexes $\hat{\psi}^g_\bullet$ starting from a matrix of the map
  $\psi^g_0$ for a given $g\in G$.
\end{remark}

\begin{remark}
  As observed in Section \ref{sec:examples}, the use of
  \textsc{factor} in the context of Algorithm \ref{bettichar} requires
  solving a system of linear equations. In systems where a matrix
  factorization function is implemented (such as Macaulay2), this
  could rely on Gr\"obner bases calculations. Moreover, the result of
  such a matrix factorization is, in principle, not unique. However,
  Theorem \ref{thm:1} guarantees any map of complexes
  $\hat{\psi}^g_\bullet$ that extends $\psi^g_0$ can be used to
  determine the value of the Betti character $\beta^G_{i,j} (M)$ at
  $g$. Therefore how factorization is actually implemented should not
  be a source of concern.
\end{remark}

Given a set $\{g_1,\dots,g_r\}$ of representatives of the conjugacy
classes of $G$, the character $\chi$ of a finite dimensional
representation of $G$ can be encoded into the list
$\{\chi(g_1),\dots, \chi(g_r)\}$. This data type will be used by our
algorithm to return Betti characters. Moreover, the collection of all
nonzero Betti characters can be assembled into a hash table with keys
$(i,j)$ and corresponding values $\beta^G_{i,j} (M)$, where $i$ is a
homological degree and $j$ is a degree.

\begin{algorithm}[hb]
  \caption{Betti characters}\label{bettichar}
  \algrenewcommand\algorithmicrequire{\textbf{Input:}}
  \algrenewcommand\algorithmicensure{\textbf{Output:}}
  \begin{algorithmic}[1]
    \Require{
      \begin{itemize}
      \item $D=\{D_1,\dots,D_n\}$, a list with $D_i$ the matrix
        of the differential $d_i$ relative to the bases $\mathcal{E}_i$
        and $\mathcal{E}_{i-1}$
      \item $P=\{P_1,\dots,P_r\}$, a list with $P_k$ the matrix of
        $\psi^{g_k}_0$ for a set $\{g_1,\dots,g_r\}$ of representatives
        of the conjugacy classes of $G$
      \end{itemize}
    }
    \Ensure{$B$, a hash table with keys $(i,j)$ and values $B_{i,j} = \beta^G_{i,j} (M)$ the nonzero Betti characters of $G$ on $M$ represented as lists}
    \Statex
    \Function{BettiCharacters}{$D,P$}
    \State $B \gets \{\}$
    \Comment{initialize empty hash table}
    \For{$k\gets 1,\dots,r$}
    \Comment{loop for homological degree $0$}
    \State $Q_k \gets P_k$
    \Comment{matrix of $\psi^{g_k}_0$}
    \ForAll{$j\in$ \Call{Degrees}{$Q_k$}}
    \State $B_{0,j} \gets B_{0,j} \cup \{$\Call{Trace}{$Q_k$,$j$}$\}$
    \Comment{append to (possibly empty) list}
    \EndFor
    \EndFor
    \For{$i\gets 1,\dots,n$}
    \Comment{loop for homological degrees $>0$}
    \For{$k\gets 1,\dots,r$}
    \State $C_k \gets $ \Call{Action}{$g_k^{-1}$,$D_i$}\label{line:1}
    \Comment{matrix of $d^{g_k}_i$}
    \State $Q_k \gets $ \Call{Factor}{$Q_k D_i$,$C_k$}\label{line:2}
    \Comment{matrix of $\hat{\psi}^{g_k}_i$}
    \ForAll{$j\in$ \Call{Degrees}{$Q_k$}}
    \State $B_{i,j} \gets B_{i,j} \cup \{$\Call{Trace}{$Q_k$,$j$}$\}$\label{line:3}
    \Comment{append to (possibly empty) list}
    \EndFor
    \EndFor
    \EndFor
    \State \textbf{return} $B$
    \EndFunction
  \end{algorithmic}
\end{algorithm}

The computation of $d^{g_k}_i$ in line \ref{line:1} follows
Proposition \ref{pro:1} \ref{item:2}. The construction of
$\hat{\psi}^{g_k}_i$ via a matrix factorization in line \ref{line:2}
ensures that $\hat{\psi}^{g_k}_\bullet$ is a map of complexes
extending $\psi^{g_k}_0$.  The computation of the values of the Betti
characters in line \ref{line:3} is justified by Theorem \ref{thm:1}.

\begin{remark}
  Many interesting equivariant resolutions arise as minimal free
  resolutions of quotient rings $R/I$ where $I$ is a $G$-stable ideal,
  i.e., $GI\subseteq I$. This is the case of the example in Section
  \ref{sec:examples}. Note that a minimal free resolution
  $(F_\bullet, d_\bullet)$ of $M=R/I$ has $F_0 = R$ with a trivial
  action of $G$ on
  $\operatorname{Tor}^R_0 (M,\Bbbk) \cong F_0 \otimes_R \Bbbk \cong
  \Bbbk$. In this setting, the matrices $P_1,\dots,P_r$ in the input
  of Algorithm \ref{bettichar} are 1 by 1 identity matrices, hence
  they can be omitted from the input. This implies that the
  description of the group action on the resolution is entirely
  determined by group action on the ring, which is computed by the
  function \textsc{action}.
\end{remark}

\begin{remark}
  Algorithm \ref{bettichar} assumes the action of the group $G$ on the
  minimal free resolution $(F_\bullet, d_\bullet)$ can be explicitly
  described in homological degree zero by matrices $ \psi^g_0 $ for
  certain $g\in G$. The action in higher homological degrees is then
  computed by recursive matrix factorizations. More explicitly, if the
  matrix $ \psi^g_{i-1} $ is known, the matrix $ \psi^g_i $ can be
  computed by a matrix factorization via the following diagram.
  \begin{center}
    \begin{tikzpicture}[description/.style={fill=white,inner sep=2pt}]
      \matrix (m) [matrix of math nodes, row sep=3em,
      column sep=2.5em, text height=1.5ex, text depth=0.25ex]
      { F_{i-1} & F_i \\
        F_{i-1} & F_i \\ };
      \path[->,font=\scriptsize]
      (m-2-2) edge node[right] {$ \psi^g_i $} (m-1-2);
      \path[->,font=\scriptsize]
      (m-1-2) edge node[above] {$ d^g_i $} (m-1-1);
      \path[->,font=\scriptsize]
      (m-2-1) edge node[left] {$ \psi^g_{i-1} $} (m-1-1);
      \path[->,font=\scriptsize]
      (m-2-2) edge node[below] {$ d_i $} (m-2-1);
    \end{tikzpicture}
  \end{center}
  Note that $ \psi^g_{i-1} $ can also be computed from $ \psi^g_i $
  via a matrix factorization. Therefore it is possible to extend the
  algorithm so that the starting point for computing the action on a
  resolution can be given in arbitrary homological degree rather than
  degree zero.
\end{remark}

\begin{remark}
  In the setting of Section \ref{sec:introduction}, the graded
  components of $M$ are also finite dimensional representations of
  $G$, and the reader may be interested in finding their
  characters. This is a much easier task than computing Betti
  characters. For completeness, we briefly outline a strategy for
  computing characters of graded components.
  
  Consider a (minimal) presentation $d_1 \colon F_1 \to F_0$ of
  $M$. Fix a basis of $F_0$ and a module term ordering on the terms of
  $F_0$ relative to this basis. Let $j$ be a degree such that the
  graded component $M_j$ is nonzero, and let
  $\mathcal{B} = \{ b_1,\dots,b_s \}$ be the set of terms of degree
  $j$ in $F_0$ not contained in the leading term module of
  $\operatorname{im} d_1$. By Macaulay's Basis Theorem \cite[Theorem
  1.5.7]{MR1790326}, the residue classes $\bar{b}_1,\dots, \bar{b}_s$
  modulo $\operatorname{im} d_1$ form a $\Bbbk$-basis of $M_j$.  For
  $g\in G$, we may write
  \begin{equation*}
    g \bar{b}_v = \sum_{u=1}^s a_{uv} \bar{b}_u
  \end{equation*}
  for some $a_{uv}\in\Bbbk$. The character of $G$ on $M_j$ evaluated
  at $g$ is the trace of the matrix $[a_{uv}]$. It is worth noting
  that the input of Algorithm \ref{bettichar} provides all the
  information needed to compute these characters (and more).
\end{remark}


\newcommand{\etalchar}[1]{$^{#1}$}
\def\cprime{$'$} \def\Dbar{\leavevmode\lower.6ex\hbox to 0pt{\hskip-.23ex
  \accent"16\hss}D} \def\Dbar{\leavevmode\lower.6ex\hbox to 0pt{\hskip-.23ex
  \accent"16\hss}D} \def\Dbar{\leavevmode\lower.6ex\hbox to 0pt{\hskip-.23ex
  \accent"16\hss}D} \def\Dbar{\leavevmode\lower.6ex\hbox to 0pt{\hskip-.23ex
  \accent"16\hss}D}

\end{document}